\def\Z{{\mathbb{Z}}}
\newcommand{\A}{\mathrm{Ann}}
\newcommand{\ipc}{\mathrm{IPC}}
\newcommand{\iic}{\mathrm{IIC}}
\newcommand{\event}{\mathcal{E}}
\newcommand{\vset}{\mathrm{S}}
\newcommand{\ball}{\mathrm{B}}
\newcommand{\pn}[1]{p_{\scriptscriptstyle #1}}
\newtheorem{theorem}{Theorem}
\newtheoremstyle{likedef}
  {}%
  {}%
  {}%
  {\parindent}%
  {\bfseries}%
  {.}%
  {.5em}%
  {}%
\theoremstyle{likedef}
\newtheorem{remark}{Remark}
\begin{document}
\title{The incipient infinite cluster does not stochastically dominate the invasion percolation cluster in two dimensions}

\author{Art\"{e}m Sapozhnikov\thanks{ETH Z\"urich, Department of Mathematics, R\"amistrasse 101, 8092 Z\"urich. Email: artem.sapozhnikov@math.ethz.ch.
The research of the author has been supported by the grant ERC-2009-AdG  245728-RWPERCRI.}
}

\maketitle

\footnotetext{MSC2000: Primary 82C43, 60K35, 82B27, 82B43.}
\footnotetext{Keywords: Invasion percolation, incipient infinite cluster, critical percolation, near-critical percolation, 
correlation length, stochastic domination.}

\begin{abstract}
This note is motivated by results in \cite{tree,DSV} about 
global relations between the invasion percolation cluster (IPC) and 
the incipient infinite cluster (IIC) on regular trees and on two dimensional lattices, respectively.
Namely, that the laws of the two objects are mutually singular, and, in the case of regular trees, 
that the IIC stochastically dominates the IPC. 
We prove that on two dimensional lattices, the IIC does not stochastically dominate the IPC. 
This is the first example showing that the relation between the IIC and IPC is 
significantly different on trees and in two dimensions. 
\end{abstract}

\bigskip
\bigskip
\bigskip

In the classical mathematical theory of percolation, the edges (or vertices) of an infinite lattice are deleted independently with 
probability $1-p$, and the properties of the remaining components are studied. 
There is a phase transition in the parameter: if $p$ is bigger than some critical value $p_c$, 
there is an infinite component with probability $1$ (in this case we say that percolation occurs), and 
if $p$ is below $p_c$, the probability of the existence of an infinite component is $0$. 
If $p=p_c$, the geometric properties of connected components are highly non-trivial. 
It is expected (and has been proved for certain lattices, see, e.g., \cite{LSW:onearm,SW}) 
that simple characteristics of connected components 
(e.g., diameter, volume) obey power laws. In other words, at criticality, connected components are self-similar random objects. 

\medskip

Invasion percolation is a stochastic growth model \cite{Chandler,Lenormand} 
that mirrors aspects of the critical percolation picture without tuning any parameter. 
To define the model, let $G=(V,E)$ be an infinite connected graph in which a distinguished vertex, the origin, is chosen. 
The edges of $G$ are assigned independent uniform random variables $\tau_e$ on $[0,1]$, called weights. 
(The underlying probability measure is denoted by $\mathbb P$.) 
The {\it invasion percolation cluster} (IPC) of the origin on $G$ is defined as the limit of an increasing sequence $(G_n)$ of 
connected sub-graphs of $G$ as follows. 
Define $G_0$ to be the origin. 
Given $G_n = (V_n,E_n)$, the edge set $E_{n+1}$ is obtained from $E_n$ by adding to it 
the edge from the set $\{(x,y)\in E\setminus E_n~:~x\in V_n, y\in V\}$ with the smallest weight. 
Let $G_{n+1}$ be the graph induced by the edge set $E_{n+1}$.

Invasion percolation is closely related to independent bond percolation.
For any $p\in[0,1]$ we say that an edge $e\in E$ is $p$-open if $\tau_e< p$ or $p$-closed if $\tau_e \geq p$.
The resulting random graph $(V, \{e\in E~:~\tau_e < p\})$ of $p$-open edges has 
the same distribution as the one obtained from $G$ by deleting edges independently with probability $1-p$.
It was shown in \cite{Newman} that for all $p>p_c$ the IPC on $\Z^d$ intersects the infinite $p$-open cluster with probability $1$.
This result was extended to a much more general class of graphs in \cite{HPS}.
It is also easy to see that once a vertex in an infinite $p$-open cluster is invaded,
all further invaded edges are in the infinite $p$-open cluster that contains this vertex.
The above observations imply that $\limsup_{n\to\infty}\tau_{e_n}=p_c$, where $e_n$ is an edge invaded at step $n$.
This indicates 
that invasion dynamics reproduce the critical percolation picture, 
a phenomenon often referred to as {\it self-organized criticality}. 
A comprehensive analysis of invasion percolation and its relation to critical independent percolation 
were obtained in \cite{BJV,DS,DS:CLT,DSV,Jarai,Zhang} for two dimensional lattices, and in \cite{tree,Goodman} for regular trees. 

\medskip

Results of \cite{Newman} also indicate that a large proportion of the edges in the IPC belongs to big $p_c$-open clusters.
Very large $p_c$-open clusters are usually referred to as {\it incipient infinite clusters} (IIC).
The first mathematical definition of the IIC of the origin in two dimensions was given by Kesten in \cite{KestenIIC}:
(1) by conditioning on the $p_c$-open cluster of the origin being connected to a vertex at distance $n$ from the origin and letting $n\to\infty$, or
(2) by conditioning on the $p$-open cluster of the origin being infinite and letting $p\downarrow p_c$.
These are two different constructions of the same measure on configurations of open and closed edges.
Under this measure the open cluster of the origin is almost surely infinite.

Relations between the IIC and the IPC in two dimensions were first observed in \cite{Jarai,Zhang}.
The scaling of the moments of the number of invaded sites in a box was obtained there, which turned out to be
the same as the scaling of the corresponding moments for the IIC.
Similar analysis shows that the $k$-point functions of the IIC and the IPC are comparable.
Corresponding results about local similarities of the IIC and IPC on a regular tree are obtained in \cite{tree}.

\bigskip

The result of this note is motivated by global relations between the IIC and the IPC on regular trees and in two dimensions, 
discovered respectively in \cite{tree} and \cite{DSV}. 
In \cite{tree}, it was shown that on regular trees, 
the IIC stochastically dominates the IPC, and the laws of the IIC and IPC are mutually singular. 
In \cite{DSV}, it was proved that in two dimensions, the laws of the IIC and the IPC are also mutually singular. 
The proof in \cite{DSV} also implies that the IPC does not stochastically dominate the IIC. 
In fact, the proof in \cite{DSV} strongly suggests that in two dimensions one should expect the same behavior as 
on regular trees, namely, that the IIC should stochastically dominate the IPC. 
The main result of this note is a disproof of the above belief. 
In Theorem~\ref{thm:IPCandIIC} we prove that {\it the IIC does not stochastically dominate the IPC in two dimensions}. 
The key idea behind the proof is that finite connected components of large volume are more likely to be seen 
in supercritical percolation than in the critical one. 

\bigskip

In the remainder, we will formally define independent percolation, 
recall the definition of Kesten's IIC from \cite{KestenIIC}, 
state and prove the main result of this note. 
For simplicity {\it we restrict ourselves here to the square lattice}.
The result of this note still holds for lattices which are invariant under reflection in one of the coordinate axes
and under rotation around the origin by some angle in $(0,\pi)$. 
In particular, this includes the triangular and honeycomb lattices.

\medskip

We consider the square lattice $(\Z^2,{\mathbb E}^2)$, where ${\mathbb E}^2 = \{(x,y)\in\Z^2\times\Z^2~:~|x-y|=1\}$, 
which we simply denote by $\Z^2$. 
For $p\in[0,1]$, we consider a probability space $(\Omega_p,{\mathcal F}_p,{\mathbb P}_p)$, where
$\Omega_p = \{0,1\}^{{\mathbb E}^2}$, ${\mathcal F}_p$ is the $\sigma$-field generated by the finite-dimensional
cylinders of $\Omega_p$, and ${\mathbb P}_p$ is a product measure on $(\Omega_p,{\mathcal F}_p)$,
${\mathbb P}_p = \prod_{e\in{\mathbb E}^2}\mu_e$, where $\mu_e$ is given by $\mu_e(\omega_e = 1) = 1 - \mu_e(\omega_e = 0) = p$, for
vectors $(\omega_e)_{e\in{\mathbb E}^2}\in\Omega_p$.
We say that an edge $e$ is {\it open} if $\omega_e = 1$, and $e$ is {\it closed} if $\omega_e = 0$.
The event that two sets of sites $\vset_1,\vset_2\subset\Z^2$ are connected by an open path
is denoted by $\vset_1 \leftrightarrow \vset_2$, and 
the complement of this event is denoted by $\vset_1 \nleftrightarrow \vset_2$.
If a set $\vset$ is a singleton $\{x\}$, then we simply write $\vset = x$. 
In particular, we write $x\leftrightarrow y$ for the event that $x$ and $y$ are connected by an open path. 
The percolation probability $\theta(p)$ is the probability under $\mathbb P_p$ that the open cluster of the origin is infinite. 
There exists a critical probability $p_c\in (0,1)$ such that $\theta(p)>0$ if and only if $p>p_c$. 
(In particular, $\theta(p_c) = 0$ in two dimensions, as proved by Kesten in \cite{Kesten1/2}. 
This result has also been proved for percolation on $\Z^d$, with $d\geq 19$, 
by Barsky and Aizenman \cite{BarAiz91} and Hara and Slade \cite{HS}.
The question whether $\theta(p_c)=0$ for $d\in[3,18]$ still remains open.)
We refer the reader to \cite{Grimmett} for background on percolation.

\medskip

Let $\ball(n) = [-n,n]^2$ be the $l^\infty$-ball of radius $n$ centered at the origin and $\partial \ball(n) = \ball(n)\setminus \ball(n-1)$ 
its internal boundary. 
It is shown by Kesten in \cite{KestenIIC} that the limit
\begin{equation*}
\nu\left[\event\right] = \lim_{N\to\infty}{\mathbb P}_{p_c}\left[\event~|~0\leftrightarrow \partial \ball(N)\right]
\end{equation*}
exists for any event $\event$ that depends on the state of finitely many edges in ${\mathbb E}^2$.
The unique extension of $\nu$ to a probability measure on configurations of open and closed edges
exists. Under this measure, the open cluster of the origin is almost surely infinite.
It is called the {\it incipient infinite cluster} (IIC).

\bigskip

The main result of this note is the following theorem. 
\begin{theorem}\label{thm:IPCandIIC}
The IPC is not stochastically dominated by the IIC on $\Z^2$, i.e., 
there is no coupling such that the IIC contains the IPC with probability $1$.  
\end{theorem}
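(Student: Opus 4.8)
The plan is to find a finite event $\event$ on which the IPC places more mass than the IIC, and which is \emph{increasing} (upward closed) in the natural partial order on configurations; the existence of such an event is exactly what obstructs a coupling in which the IIC contains the IPC. The event I would use encodes the presence of a large finite open cluster ``hanging off'' the invaded/infinite cluster near the origin — concretely, fix a box $\ball(n)$ and let $\event = \event_n$ be the event that some vertex on $\partial\ball(n)$ is connected, by an open path staying in an annulus $\ball(m)\setminus\ball(n)$, to a set of at least $k$ vertices, while that whole cluster is separated from $\partial\ball(m)$ by a closed circuit (so the cluster is finite and of volume $\geq k$, but we only assert the increasing part). To make the comparison clean, it is better to take $\event_n$ itself to be a genuinely increasing event: e.g. ``there exist $k$ open edges forming a path from $\partial\ball(n)$ into the annulus,'' whose IPC-probability we bound below using the Járai--Zhang-type moment estimates quoted above (the $k$-point functions of the IPC are comparable to those of the IIC, and in fact invasion samples near-critical — hence slightly supercritical — clusters at macroscopic scales), and whose IIC-probability we bound above using Kesten's construction together with quasi-multiplicativity of arm events and the fact that $\theta(p_c)=0$.

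The key quantitative input is the heuristic highlighted in the introduction: finite connected components of large volume are more likely in supercritical than in critical percolation. I would make this precise through the near-critical description of invasion percolation from \cite{DS,DSV}: conditionally on the invasion reaching distance $\sim L$, the invaded region inside $\ball(L)$ looks like a percolation configuration at a parameter $p$ slightly above $p_c$ (at the correlation-length scale $L$), whereas the IIC inside $\ball(L)$ is governed by $\mathbb P_{p_c}$ conditioned on a one-arm event. Since for $p>p_c$ the cluster-size tail $\mathbb P_p(|C|\geq k)$ is strictly larger (for appropriate $k$ at the relevant scale) than the corresponding critical quantity $\mathbb P_{p_c}(|C|\geq k)$ — and this persists under the IIC conditioning because the one-arm event is asymptotically independent of what happens in a sub-annulus at the correlation-length scale — one gets, for suitable $n,m,k$,
\begin{equation*}
\mathbb P_{\ipc}[\event_{n,m,k}] \;>\; \nu[\event_{n,m,k}].
\end{equation*}
Choosing $\event_{n,m,k}$ to be increasing (detecting ``a long open path / large open cluster emanating from the shell $\partial\ball(n)$'' without any closed-edge requirement) is what lets this strict inequality rule out domination.

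The last step is the soft one: if there were a coupling $(\omega^{\iic},\omega^{\ipc})$ with $\omega^{\ipc}\subseteq\omega^{\iic}$ a.s., then for every increasing event $\event$ we would have $\mathbb P_{\ipc}[\event]=\mathbb P[\omega^{\ipc}\in\event]\leq \mathbb P[\omega^{\iic}\in\event]=\nu[\event]$, contradicting the displayed inequality; Strassen's theorem gives the converse, so it suffices to exhibit one such $\event$. I expect the main obstacle to be the middle paragraph: showing that the IIC conditioning (one long arm to $\partial\ball(N)$, $N\to\infty$) does \emph{not} inflate the probability of seeing a large finite cluster in a fixed inner annulus as much as the near-critical invasion measure does. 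This requires separating scales carefully — using quasi-multiplicativity and RSW/gluing arguments to decouple the ``arm to infinity'' part from the ``large local cluster'' part under $\nu$, and, on the IPC side, invoking the precise near-critical scaling of the invasion (the parameter is bounded below by $p_c + c/(\text{scale})^{1/\nu}$-type estimates from \cite{DS,DSV}) to guarantee a genuinely supercritical — not merely critical — cluster-size tail at the chosen scale. Getting the two estimates to straddle a common threshold $k$ is where the real work lies; everything else is bookkeeping with standard two-dimensional percolation tools.
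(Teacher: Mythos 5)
Your soft step is exactly right and matches the paper: exhibit one event, increasing with respect to inclusion of subgraphs, on which the IPC law strictly exceeds the IIC law; any coupling with the IIC containing the IPC would force the opposite inequality. The gap is in the choice of event and in the quantitative comparison, and the events you propose cannot work. First, the versions that genuinely detect a \emph{large finite} cluster (sealed off by a closed circuit) are not increasing, as you concede; but the increasing fallback (``$k$ open edges forming a path from $\partial\ball(n)$ into an annulus'') is satisfied almost surely by \emph{both} the IIC and the IPC, since both are infinite connected graphs through the origin and hence cross every annulus. Second, and more fundamentally, your quantitative mechanism --- that $\mathbb P_p(|C|\geq k)$ is strictly larger for $p>p_c$ than at $p_c$ --- does not survive the constants. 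The invasion at scale $n$ only has access to parameters $p\leq \pn{n}$, i.e.\ inside the critical window $L(p)\gtrsim n$, and Kesten/Nolin scaling theory says precisely that probabilities of macroscopic connection and cluster-size events at scale $n$ are comparable, up to multiplicative constants, for all $p\in[p_c,\pn{n}]$. Meanwhile the IIC upper bound via quasi-multiplicativity carries its own constant $C>1$. A ``strictly larger but comparable'' inequality cannot beat $C$, so you never reach $\mathbb P[\event_\ipc]>\nu[\event_\iic]$ along this route.

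The missing idea is to choose an event whose probability is exponentially small in the \emph{volume}, so that the tiny excess $\pn{n}-p_c$ gets raised to the power $n^2$. The paper takes $\event(n)=\{$every edge of $\A(n,2n)$ is open$\}$, with probability exactly $p^{|\A(n,2n)|}$. On the IPC side, if all edges of $\A(n,2n)$ are $\pn{n}$-open and $\ball(2n)\nleftrightarrow\partial\ball(4n)$ at level $\pn{n}$ (an event of probability at least $c>0$ by duality and RSW, and independent of the first requirement), the invasion must swallow the whole annulus before it can escape $\ball(4n)$, giving $\mathbb P[\event_\ipc(n)]\geq c\,\pn{n}^{|\A(n,2n)|}$. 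On the IIC side, quasi-multiplicativity of the one-arm probability gives $\nu[\event_\iic(n)]\leq C\,p_c^{|\A(n,2n)|}$. The ratio is then at least $c'\exp\bigl(c'(\pn{n}-p_c)n^2\bigr)$, and Kesten's estimate (\cite[(4.5)]{kesten}, which uses $\theta(p_c)=0$) gives $(\pn{n}-p_c)n^2\to\infty$, which beats every constant. This volume-order exponent is exactly what your cluster-size-tail events lack; without it the argument stalls at ``comparable up to constants,'' which is consistent with domination.
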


\bigskip

\begin{remark}
In \cite{tree}, it was proved that on a regular tree, 
the IIC stochastically dominates the IPC, 
and the laws of the IIC and the IPC are mutually singular. 
The analysis in \cite{tree} is based on 
a representation of the IPC as an infinite backbone rooted at the origin 
with finite branches attached to it, which are distributed as 
subcritical percolation clusters 
with varying percolation parameter described by the so-called forward maximal weight process along the backbone.
Since the IIC on a regular tree is described as an infinite backbone from the origin 
with critical percolation clusters attached to it, 
the stochastic domination immediately follows from these representations.  
The proof of mutual singularity is based on a deep analysis of the distribution of the finite branches of the IPC. 
\end{remark}
\begin{remark}
In \cite{DSV}, it is shown that the laws of the IPC and the IIC are mutually singular. 
The main ingredient of the proof of this result is \cite[Theorem~7]{DSV}, which states the following. 
Let $G = (V,E)$ be an infinite connected subgraph of $(\Z^2,{\mathbb E}^2)$ which contains the origin.
We call an edge $e\in E$ a {\it disconnecting edge} for $G$
if the graph $(V,E\setminus\{e\})$ has a finite component, and if the origin belongs to this finite component.
Let $\mathcal{D}_\iic(m,n)$ be the event that the IIC does not contain a disconnecting edge in the annulus $\ball(n)\setminus \ball(m)$, and let
$\mathcal{D}_\ipc(m,n)$ be the event that the IPC does not contain a disconnecting edge in the annulus $\ball(n)\setminus \ball(m)$.
Then \cite[Theorem~7]{DSV} states that there exists a sequence $(n_k)$ such that
\[
{\mathbb P}\left[\sum_k {\mathds 1}(\mathcal{D}_\ipc(n_k, n_{k+1})) < \infty\right] = 1 
\quad\mbox{and}\quad
\nu\left[\sum_k {\mathds 1}(\mathcal{D}_\iic(n_k, n_{k+1})) = \infty\right] = 1 .\
\]
The above statement implies that the IIC is supported on clusters for which
infinitely many of the events $\mathcal{D}_\iic(n_k, n_{k+1})$ occur, and 
the IPC is supported on clusters for which only finitely many
of the events $\mathcal{D}_\ipc(n_k, n_{k+1})$ occur. 
In particular, this immediately implies that 
there is no coupling such that the IPC contains the IIC with probability $1$.
\end{remark}

\bigskip

\begin{proof}[Proof of Theorem~\ref{thm:IPCandIIC}]
For integers $m<n$, let $\A(m,n) = \ball(n)\setminus \ball(m)$.
Let $\event_\ipc(n)$ be the event that $\A(n,2n)$ is a subgraph of the IPC (i.e., all the edges of $\A(n,2n)$ are in the IPC), 
and let $\event_\iic(n)$ be the event that $\A(n,2n)$ is a subgraph of the IIC. 
We will prove that there exists $n$ such that 
\begin{equation}\label{eq:IPCandIIC:relation}
\mathbb P[\event_\ipc(n)]\geq 2~\nu[\event_\iic(n)] > 0 ,\
\end{equation}
which will complete the proof of Theorem~\ref{thm:IPCandIIC}. 

\medskip

For $p>p_c$ and $\varepsilon>0$, let 
\[
L(p) = \min\{n~:~ \mathbb P_p[\text{there exists an open left-right crossing of $[0,n]^2$}]\geq 1-\varepsilon\} 
\]
be the finite-size scaling correlation length (see, e.g., \cite[(1.21)]{kesten}), and let $\pn{n}=\sup\{p~:~ L(p)>n\}.$
If $\varepsilon$ is small enough, $L(p)\to\infty$ as $p\to p_c$ and $L(p)=1$ for $p$ close to $1$. 
In particular, the function $p_n$ is well-defined. 
From now on, we fix such $\varepsilon$. By the definition of $L(p)$ and $p_n$, for all $n$ and $m\leq n$, we have 
\[
\mathbb P_{\pn{n}}[\text{there exists an open left-right crossing of $[0,m]^2$}]\leq 1-\varepsilon .\
\]
Therefore, it follows from the planar duality (see, e.g., \cite[Section~11.2]{Grimmett}) and 
the RSW theorem (see, e.g., \cite[Section~11.7]{Grimmett}) that there exists $c>0$ such that 
\begin{equation}\label{eq:RSWforpn}
\mathbb P_{\pn{n}} \left[\ball(2n)\nleftrightarrow \partial \ball(4n)\right] \geq c > 0 ,\quad\mbox{for all }n\geq 1 .\
\end{equation}
Let $\event(n)$ be the event that all the edges of $\A(n,2n)$ are open. 
Note that for any $p\in(0,1)$, by the definition of invasion, 
the event $\event_\ipc(n)$ occurs if (a) all the edges of $\A(n,2n)$ are $p$-open, and 
(b) $\ball(2n)$ is not connected to $\partial \ball(4n)$ by a $p$-open path. 
In particular, by taking $p = \pn{n}$, independence and \eqref{eq:RSWforpn} give 
\begin{equation}\label{eq:IPC:lowerbound}
\mathbb P[\event_\ipc(n)] \geq 
c ~\mathbb P_{\pn{n}}[\event(n)]  = c ~\pn{n}^{|\A(n,2n)|} .\
\end{equation}
On the other hand, by the definition of the IIC measure and independence, we have 
\begin{equation}\label{eq:IIC:relation}
\nu[\event_\iic(n)] = \lim_{N\to\infty} 
\frac{\mathbb P_{p_c}[\event(n), 0\leftrightarrow \partial \ball(N)]}{\mathbb P_{p_c}[0\leftrightarrow \partial \ball(N)]}
=
\mathbb P_{p_c}[\event(n)] \lim_{N\to\infty}
\frac{\mathbb P_{p_c}[0\leftrightarrow \partial \ball(n)]\mathbb P_{p_c}[\ball(2n)\leftrightarrow \partial \ball(N)]}
{\mathbb P_{p_c}[0\leftrightarrow \partial \ball(N)]} .\
\end{equation}
By the RSW theorem and a standard gluing argument (see \cite[Section~11.7]{Grimmett} or \cite[(29)]{KestenIIC}), 
there exists $C<\infty$ such that for all $n\geq 1$ and $N\geq 2n$, 
\[
\mathbb P_{p_c}\left[0\leftrightarrow \partial \ball(N)\right]
\leq
\mathbb P_{p_c}\left[0\leftrightarrow \partial \ball(n)\right]
~\mathbb P_{p_c}\left[\ball(2n)\leftrightarrow \partial \ball(N)\right]
\leq 
C~\mathbb P_{p_c}\left[0\leftrightarrow \partial \ball(N)\right] .\
\]
This property is usually referred to as quasi-multiplicativity of the one-arm probability. 
Plugging in the above relations into \eqref{eq:IIC:relation} gives
\begin{equation}\label{eq:IIC:bounds}
\mathbb P_{p_c}[\event(n)]\leq
\nu[\event_\iic(n)]
\leq
C~\mathbb P_{p_c}[\event(n)] 
= C~ p_c^{|\A(n,2n)|} .\
\end{equation}
In particular, for any $n$, $\nu[\event_\iic(n)]>0$. 
We conclude from \eqref{eq:IPC:lowerbound} and \eqref{eq:IIC:bounds} that  
\[
\mathbb P[\event_\ipc(n)] \geq c'~ (\pn{n}/p_c)^{|\A(n,2n)|}~ \nu[\event_\iic(n)]
\geq c'~ e^{c'(\pn{n}-p_c)n^2}~ \nu[\event_\iic(n)] ,\
\]
for some constant $c'>0$. 
It follows from \cite[(4.5)]{kesten} (see also \cite[Proposition~3.2]{Nolin}) and the fact that $\theta(p_c)=0$ 
that 
\[
(\pn{n}-p_c)n^2 \to \infty\quad \mbox{as }n\to\infty .\
\] 
In particular, there exists $n$ such that \eqref{eq:IPCandIIC:relation} holds. 
This completes the proof of Theorem~\ref{thm:IPCandIIC}. 
\end{proof}

\medskip

\begin{remark}
Instead of showing \eqref{eq:IPCandIIC:relation}, 
it would be more natural to prove that there exists $n$ such that 
$\mathbb P[\widetilde \event_\ipc(n)]\geq 2~\nu[\widetilde \event_\iic(n)] > 0$, where 
$\widetilde \event_\ipc(n)$ is the event that $\ball(n)$ is a subgraph of the IPC, and $\widetilde \event_\iic(n)$ is 
the event that $\ball(n)$ is a subgraph of the IIC. 
This can be proved similarly to \eqref{eq:IPCandIIC:relation}. 
However, since the proof of \eqref{eq:IPCandIIC:relation} is slightly simpler, 
we decided to consider events $\event_\ipc(n)$ and $\event_\iic(n)$ 
instead of $\widetilde \event_\ipc(n)$ and $\widetilde \event_\iic(n)$. 
\end{remark}

\paragraph{Acknowledgements.}
I thank Michael Damron for enlightening discussions and comments on the draft.

\end{document}